\newtheorem{theorem}{Theorem}[section]
\newtheorem{corollary}[theorem]{Corollary}
\newtheorem{prop}[theorem]{Proposition}
\newtheorem{lemma}[theorem]{Lemma}
\newtheorem{definition}[theorem]{Definition}
\theoremstyle{remark}
	\newtheorem{remark}[theorem]{Remark}
	\newtheorem{exmp}[theorem]{Example}
\newcommand{\beq} {\begin{equation}}
\newcommand{\eeq} {\end{equation}}
\begin{document}

\title[Positivity on simple $G$-varieties]{Positivity on simple $G$-varieties}
\author{Praveen Kumar Roy, Pinakinath Saha}

\address{NMIMS MPSTME, Mumbai-400056, India}
\email{praveenkumar.roy@nmims.edu.in}

\address{Department of Mathematics, Indian Institute of Technology, Delhi, Hauz Khas, New
Delhi-110016, India.}
\email{pinakinath@iitd.ac.in}

\subjclass[2020]{Primary 14C20, 14J60, 14M15, 14L30, 14M17.}
\keywords{$G$-variety, equivariant vector bundle, nef vector bundle, ample vector bundle, Seshadri constant.}

\begin{abstract} 
Let $X$ be a normal projective variety equipped with an action of a semisimple algebraic group $G$, and assume that $X$ contains a unique closed orbit. Let $B$ be a Borel subgroup of $G$ and let $E$ be a $B$-equivariant vector bundle on $X$. In this article, we prove that $E$ is ample (respectively, nef) if and only if its restriction to the finite set of $B$-stable curves in $X$ is ample (respectively, nef).

Moreover, we compute the nef cone of the blow-up of a nonsingular simple $G$-projective variety $X$ at a unique $B$-fixed point $x^-$, referred to as the sink of $X$. As an application, when $X$ is nonsingular, we calculate the Seshadri constants of any ample line bundle (not necessarily $G$-equivariant) at $x^-$. In addition, we compute the Seshadri constants of $B$-equivariant vector bundles at $x^{-}$. 
\end{abstract}
\maketitle

\section{Introduction}
Let $X$ be a complex complete variety. A vector bundle $E$ on $X$ is said to be \textit{nef} if $\mathcal{O}_{\mathbb{P}(E)}(1)$ is a nef line bundle on the projective bundle $\mathbb{P}(E)$ over $X$.

Let $\Gamma$ be a connected solvable algebraic group, and  $X$ be a complete $\Gamma$-variety. Motivated by \cite{HMP}, we prove the following criteria for the nefness of any $\Gamma$-equivariant vector bundle $E$ on $X$. Our proof relies on a key result from \cite{fmss}, though it may also be obtained by following the argument presented in \cite{HMP}.

Our first result concerning the nefness for complete varieties is as follows:
\begin{theorem}\label{theorem: nef}
Let $\Gamma$ be a  connected solvable algebraic group, and  $X$ be a complete $\Gamma$-variety. A $\Gamma$-equivariant vector bundle $E$ on $X$ is nef if and only if the restriction $E|_{C}$ of $E$ to every $\Gamma$-stable closed curve $C$ on $X$ is nef.
\end{theorem}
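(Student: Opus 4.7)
The forward direction is immediate, since the restriction of a nef vector bundle to any closed subvariety is nef. For the converse, the plan is to pass to the projectivization and reduce the vector bundle assertion to the analogous line bundle statement, then invoke the theorem of Fulton--MacPherson--Sottile--Sturmfels.

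By definition, $E$ is nef iff $\mathcal{O}_{\mathbb{P}(E)}(1)$ is nef on $\pi\colon\mathbb{P}(E)\to X$. The $\Gamma$-equivariant structure on $E$ induces a $\Gamma$-action on $\mathbb{P}(E)$ making $\pi$ equivariant, with $\mathcal{O}_{\mathbb{P}(E)}(1)$ naturally $\Gamma$-linearized. So it suffices to prove the following line bundle version: a $\Gamma$-linearized line bundle $L$ on a complete $\Gamma$-variety $Y$ is nef iff $L\cdot C\ge 0$ for every $\Gamma$-stable closed curve $C\subset Y$.

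For the line bundle statement, the key input is the result of \cite{fmss}: for a connected solvable group $\Gamma$ acting on a complete variety $Y$, the Chow group $A_*(Y)$ is generated by classes of $\Gamma$-stable subvarieties. Strengthened to the cone of effective $1$-cycles by degenerating an irreducible curve $C'\subset Y$ along a sufficiently general one-parameter subgroup of a maximal torus $T\subset\Gamma$ (using properness of the Chow variety to take the limit, whose support is contained in the $T$-fixed locus and hence in $T$-stable curves, and iterating using the unipotent radical via Borel's fixed point theorem), one expresses $[C']$ as a non-negative combination of classes of $\Gamma$-stable curves in $Y$. Pairing with $L$ then gives $L\cdot C'\ge 0$ from the hypothesis.

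Applying this to $Y=\mathbb{P}(E)$ and $L=\mathcal{O}_{\mathbb{P}(E)}(1)$, it remains only to verify $L\cdot\tilde C\ge 0$ for every $\Gamma$-stable irreducible curve $\tilde C\subset\mathbb{P}(E)$. If $\pi(\tilde C)$ is a point, then by connectedness of $\Gamma$ it is a $\Gamma$-fixed point $x\in X$, so $\tilde C\subset\mathbb{P}(E_x)$ and $L|_{\tilde C}=\mathcal{O}_{\mathbb{P}(E_x)}(1)|_{\tilde C}$ has strictly positive degree. Otherwise $C:=\pi(\tilde C)$ is a $\Gamma$-stable curve on $X$; by hypothesis $E|_C$ is nef, so $\mathcal{O}_{\mathbb{P}(E|_C)}(1)$ is nef on $\mathbb{P}(E|_C)=\pi^{-1}(C)\supset\tilde C$, giving $L\cdot\tilde C\ge 0$. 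The main obstacle I anticipate is precisely the upgrade from FMSS's Chow-group generation statement to the effective (cone-level) statement; a signed decomposition of $[C']$ in $A_*(Y)$ would not suffice, so the degeneration argument is essential in order to witness $[C']$ as a non-negative combination of $\Gamma$-stable curve classes.
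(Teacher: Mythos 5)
Your proposal is correct and follows essentially the same route as the paper: projectivize, handle curves contracted by $\pi$ via relative ampleness, and use the Fulton--MacPherson--Sottile--Sturmfels result to replace an arbitrary curve in $\mathbb{P}(E)$ by a $\Gamma$-stable one. The one point where you go beyond the paper is worth noting: the paper simply cites the Chow-group isomorphism $A_k^{\Gamma}\cong A_k$ and treats the resulting invariant cycle as if it were a single (effective) invariant curve, whereas you correctly identify that a signed decomposition would not suffice and supply the degeneration-along-one-parameter-subgroups argument to land in the effective cone of invariant curves.
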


Let $G$ be a connected semisimple algebraic group over $\mathbb{C}$, and let $B$ be a Borel subgroup of $G$ containing a maximal torus $T$. We consider a projective variety $X$ equipped with a $G$-action such that $X$ has a unique closed $G$-orbit. Such a variety is called a \textit{simple $G$-projective variety}. Examples include generalized flag varieties, wonderful compactifications of symmetric spaces, and simple spherical varieties.

%We consider a projective variety $X$ together with an action of $G$ having a unique closed orbit, which is called {\it simple $G$-projective variety}. These varieties include generalized flag varieties, wonderful compactifications of symmetric spaces, and simple spherical varieties.

Following \cite{bial}, \cite{konar}, and \cite{brion}, we recall some basic notation and properties of $G$-variety $X$. 

Let \( \lambda \in \mathcal{X}_*(T) \) be a cocharacter of $T$. The multiplicative group \( \mathbb{G}_m \) acts on \( X \) via the homomorphism \( \lambda: \mathbb{G}_m \to T \). Denote by \( X^\lambda \) the corresponding fixed point set under this action. For any point \( x \in X \), the orbit map \( \mathbb{G}_m \to X \), given by \( t \mapsto \lambda(t) \cdot x \), extends uniquely to a morphism \( \mathbb{P}^1 \to X \). This extension defines the limits
\[
\lim_{t \to 0} \lambda(t) \cdot x \quad \text{and} \quad \lim_{t \to \infty} \lambda(t) \cdot x,
\]
both of which lie in \( X^\lambda \). These two limits are distinct unless \( x \in X^\lambda \).
Recall also that \( X^\lambda = X^T \) for any \( \lambda \in \mathcal{X}^*(T) \) outside a finite union of proper subgroups; such a \( \lambda \) is called \emph{regular}.  
For any \( \lambda \in \mathcal{X}_*(T) \) and any closed subset \( Y \subseteq X^\lambda \), define
\[
X^-(Y) := \{ x \in X \mid \lim_{t \to \infty} \lambda(t) \cdot x \in Y \}.
\]
Then \( X^-(Y) \) is a locally closed, \( T \)-invariant subset of \( X \), and the map
\[
p^- : X^-(Y) \to Y, \quad x \mapsto \lim_{t \to \infty} \lambda(t) \cdot x
\]
is a surjective, affine, \( T \)-equivariant morphism. Moreover, \( X \) is the disjoint union of the subsets \( X^-(Y) \), where \( Y \) runs over all connected components of \( X^T \). These subsets are called the \emph{negative strata}.  
As a consequence, there exists a unique open negative stratum \( X^-(Y) \); the corresponding component \( Y = Y^- \) is irreducible and is called the \emph{sink} of \( X \) for the \( \lambda \)-action.

Any normal simple $G$-projective variety $X$ has a unique $B$-fixed point, namely $x^-$, called {\it sink} of $X$ (see \cite[Lemma 2]{brion}).

A vector bundle $E$ on $X$ is said to be ample if $\mathcal{O}_{\mathbb{P}(E)}(1)$ is an ample line bundle on the projective bundle $\mathbb{P}(E)$ over $X$. If $E$ is an ample vector bundle then it follows easily that the restriction of $E$ to any (closed) curve in $X$ is also ample. But the converse is not true in general, even when $E$ has rank one and $X$ has dimension two, as shown by an example of Mumford (see \cite[Example 10.6]{Har1} or \cite[Example 1.5.2]{Laz}). 

Nevertheless, it is natural to ask the following question in specific situations: if the restriction of $E$ to every curve in $X$ is ample, then under what conditions is $E$ itself ample?

This question has been studied in several situations. An affirmative answer is given when 
$E$ is a torus-equivariant vector bundle, either on a toric variety  (see \cite[Theorem 2.1]{HMP}) or on a 
generalized flag variety (see \cite{bhn}). A related question was studied for equivariant vector bundles on wonderful compactifications in \cite{BKN}, for Bott-Samelson-Demazure-Hansen varieties or wonderful compactifications of a symmetric varieties of minimal rank in \cite{bhks}, and for $G$-Bott-Samelson-Demazure-Hansen varieties in \cite{B-P}.

In this article, we explore a related question concerning ampleness for nonsingular simple $G$-projective varieties, as discussed above, and prove the following result:

\begin{theorem}\label{theorem: ample}
Let $X$ be a nonsingular simple $G$-projective variety, and let $E$ be a $B$-equivariant vector bundle on $X$. Then $E$ is ample if and only if its restriction to every $B$-stable closed curve $C$ on $X$ is ample.
\end{theorem}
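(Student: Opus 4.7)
The ``only if'' direction is immediate, since the restriction of an ample vector bundle to any closed subvariety remains ample. For the converse, assume $E|_C$ is ample on every $B$-stable closed curve $C \subseteq X$. In particular $E|_C$ is nef on each such $C$, so Theorem~\ref{theorem: nef} (applied with $\Gamma = B$) yields that $E$ is nef on $X$. It remains to upgrade nefness to ampleness.

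My plan is to pass to the projectivization $\pi \colon Y := \mathbb{P}(E) \to X$, which carries an induced $B$-action. The tautological line bundle $L := \mathcal{O}_Y(1)$ is then $B$-equivariant and nef, and $E$ is ample iff $L$ is. I would verify ampleness of $L$ via the Nakai--Moishezon criterion: it suffices to show $L^{\dim Z} \cdot [Z] > 0$ for every irreducible closed subvariety $Z \subseteq Y$. Invoking \cite{fmss}, rational numerical classes of cycles on the $B$-variety $Y$ are generated by $B$-invariant cycles, so one can restrict attention to $B$-stable irreducible $Z$.

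Every $B$-stable irreducible curve $\tilde{C} \subseteq Y$ falls into one of two types. If $\pi$ contracts $\tilde{C}$ to a point $x \in X$, then $x$ is necessarily a $B$-fixed point and $\tilde{C} \subseteq \mathbb{P}(E_x)$; hence $L|_{\tilde{C}}$ is the restriction of the ample bundle $\mathcal{O}_{\mathbb{P}(E_x)}(1)$, so $L \cdot \tilde{C} > 0$. Otherwise $\pi(\tilde{C})$ is a $B$-stable curve $C \subseteq X$ and $\tilde{C} \subseteq \mathbb{P}(E|_C)$; since $E|_C$ is ample by hypothesis, $\mathcal{O}_{\mathbb{P}(E|_C)}(1)$ is ample and so $L|_{\tilde{C}}$ has positive degree. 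This settles the base case, namely strict positivity of $L$ on every $B$-invariant curve of $Y$.

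The main obstacle is the inductive step: promoting strict positivity on $B$-stable curves of $Y$ to strict positivity of the top self-intersection $L^{\dim Z} \cdot [Z]$ on every $B$-stable irreducible $Z \subseteq Y$, given that $L$ is already nef. I expect this to be accessible by an equivariant Nakai--Moishezon-type argument, slicing $Z$ by $B$-stable hyperplane sections and inducting on dimension, paralleling the strategies used for toric varieties in \cite{HMP}, wonderful compactifications in \cite{BKN}, and Bott--Samelson--Demazure--Hansen varieties in \cite{bhks,B-P}; the finiteness of $B$-stable curves characteristic of simple $G$-varieties, together with the FMSS description of equivariant cycles, should make this reduction tractable.
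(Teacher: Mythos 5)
Your reduction to nefness and your verification that $\mathcal{O}_{\mathbb{P}(E)}(1)$ has positive degree on every $B$-stable curve of $\mathbb{P}(E)$ are both fine, but the proof stops exactly where the real difficulty begins. Positivity of a nef line bundle on all curves does \emph{not} imply ampleness --- this is precisely Mumford's example cited in the introduction --- so Nakai--Moishezon forces you to establish $L^{\dim Z}\cdot[Z]>0$ for \emph{every} irreducible subvariety $Z$, and your ``inductive step'' is only a conjecture (``I expect this to be accessible''). The proposed mechanism is also doubtful on its own terms: a general member of $|mL|$ restricted to a $B$-stable $Z$ need not be $B$-stable, and although \cite{fmss} lets you write the class $[Z]$ as an integral combination of $B$-invariant cycles, that combination may involve components of the wrong dimension or negative coefficients, so positivity of $L^{\dim Z}$ on the invariant subvarieties does not transfer to $Z$ without further argument. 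As it stands, the converse direction is not proved.

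The gap can be closed much more cheaply, and this is what the paper does (following \cite{HMP}): by \cite[Lemma 6]{brion} a normal simple $G$-projective variety has only \emph{finitely many} $B$-stable curves. Fix a $B$-equivariant ample line bundle $L$ on $X$ and an integer $m$ exceeding $L\cdot C$ for each of these curves $C$. Since $E|_C$ is ample on each $C\simeq$ curve, $\mathrm{Sym}^m(E)|_C\otimes L^{-1}|_C$ is nef for every $B$-stable curve $C$, so Theorem~\ref{theorem: nef} gives that $\mathrm{Sym}^m(E)\otimes L^{-1}$ is nef on $X$; therefore $\mathrm{Sym}^m(E)$ is ample and hence so is $E$ \cite[Proposition 6.2.11 and Theorem 6.1.15]{Laz}. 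This twist-by-$L^{-1}$ trick converts the ampleness question into the nefness statement you have already established, and it is where the finiteness of $B$-stable curves (which you invoke only in passing) is genuinely used.
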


We then shift our attention to Seshadri constants for nef vector bundles and line bundles on simple $G$-projective variety. Seshadri constants of a nef line bundle were defined by Demailly \cite{Dem}, motivated by the Seshadri's criterion for ampleness of a line bundle to measures the local positivity of $L$ at a point $x$. It is defined as follows:

\begin{definition}[Seshadri constant at a point]\label{ratio}
Let $X$ be a projective variety and $L$ be a nef line bundle on $X$. For $x\in X$, we define: 

\[
\varepsilon(X,L;x) := \inf\limits_{x\in C}\frac{L\cdot C}{{\rm mult}_xC},
\]	
\end{definition}
where infimum is running over all reduced and irreducible curves $C$ in $X$ passing through $x$. 
When the variety is clear from the context, we will write $\varepsilon(L;x)$ instead of $\varepsilon(X, L;x)$. 
Equivalently, Seshadri constant of $L$ at a point $x \in X$ can also be easily seen to be equal to: 
\[
 \varepsilon(L;x)=\sup\{\lambda \in \mathbb{R}:{\rm Bl}_{x}^*L-\lambda E \text{ is nef}\},
\]
where ${\rm Bl}_{x}:{\rm Bl}_{x} X\to X$ denotes the blow up of $X$ at the point $x$ and $E$ denotes the exceptional divisor corresponding to this blow up. 

By Seshadri's criterion for ampleness, \cite[Theorem 7.1]{Har1}, the line bundle $L$ is ample if and only if  $\varepsilon(X, L;x)>0$ for all $x\in X.$ Thus, the Seshadri constants of ample line bundles are positive real numbers.\\

The notion of Seshadri constant was later extended by Hacon \cite{Hac} to define it for nef vector bundles $E$ on a projective variety $X$. A version of Seshadri constants for ample vector bundles also appears implicitly in the works of Beltrametti, Scheider and Somesse (see \cite{Bel1, Bel2}). 
 
Computing and bounding these constants have been an active topic of research. Although most research in this area has focused on line bundles, especially on surfaces where the geometry is more tractable, there have also been notable contributions concerning higher-dimensional varieties. A non-exhaustive list of references for line bundles includes \cite{Lee, I, N, L2, K-P, B, GHS}, and for vector bundles, see \cite{BHM, BHMN, BKN, HMP, bhn}.

In this article, as an application of Theorem \ref{theorem: nef}, we compute the Seshadri constant of any $B$-equivariant nef vector bundle on $X$.

To state the main result of this paper, we first introduce some necessary notation. Let $B^-$ be the Borel subgroup of $G$ opposite to $B$ determined by $T$. Let $x^-\in X$ be the $B$-fixed point of $X$ and let $X^-\subseteq X$ be its unique $B^-$-stable open affine neighborhood. Let $D_1,\ldots, D_r$ be the irreducible components of $X \setminus X^-$ (see \cite[Theorem 1]{brion}). 

We now state our main results on Seshadri constants, first in the case of line bundles:
\begin{theorem}\label{SC:line-bundle}
 Let $X$ be a nonsingular simple $G$-projective variety and $L = \sum_{i} a_iD_i$ be an ample line bundle on $X$, not necessarily $G$-equivariant. Then we have 
 \[
 \varepsilon(L, x^-)= \min_i\{a_i\}.
 \]
\end{theorem}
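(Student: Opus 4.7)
The plan is to apply the blow-up formulation of the Seshadri constant together with Theorem \ref{theorem: nef}. Let $\pi: \widetilde{X} \to X$ denote the blow-up of $X$ at $x^-$, with exceptional divisor $E$. Since $x^-$ is $B$-fixed, $\widetilde{X}$ inherits a $B$-action, and every line bundle on a normal $B$-variety admits a $B$-equivariant structure (as $B$ is connected and solvable). Hence
\[
\varepsilon(L, x^-) = \sup\bigl\{\lambda \geq 0 : \pi^{*}L - \lambda E \text{ is nef on } \widetilde{X}\bigr\},
\]
and by Theorem \ref{theorem: nef} this nefness can be tested on $B$-stable irreducible curves of $\widetilde{X}$.

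I would next classify these test curves into two families. Any $B$-stable curve $C' \subset E$ satisfies $(\pi^{*}L - \lambda E)\cdot C' = \lambda\deg(C') \geq 0$ for every $\lambda \geq 0$, using $\mathcal{O}_{\widetilde{X}}(E)|_{E} \cong \mathcal{O}_{\mathbb{P}(T_{x^-}X)}(-1)$, so these curves impose no additional constraint. For a $B$-stable irreducible curve $C \subset X$ containing $x^-$, smoothness of $X$ at $x^-$ together with the fact that $C$ is $T$-stable (since $x^-$ is the unique $B$-fixed point, hence $T$-fixed) forces $T_{x^-}C$ to be a $T$-weight line in $T_{x^-}X$, whence $C$ is smooth at $x^-$ with $\mathrm{mult}_{x^-}C = 1$. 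Its proper transform $\widetilde{C}$ therefore satisfies $(\pi^{*}L - \lambda E)\cdot\widetilde{C} = L\cdot C - \lambda$. Combining the two cases,
\[
\varepsilon(L, x^-) = \min_{C}\, L\cdot C,
\]
where the minimum runs over the (finite) set of $B$-stable irreducible curves $C \subset X$ passing through $x^-$.

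The decisive step is then to evaluate this minimum and show it equals $\min_{i} a_{i}$. My approach would be to invoke the explicit description of $B$-stable curves through $x^-$ and of $B$-stable prime divisors that underlies the nef-cone computation of $\widetilde{X}$ carried out earlier in the paper. Concretely, I expect to exhibit for each boundary divisor $D_i$ a distinguished $B$-stable curve $C_i \ni x^-$ satisfying the Kronecker relation $D_j\cdot C_i = \delta_{ij}$; this immediately gives $L\cdot C_i = a_i$ and hence $\varepsilon(L,x^-) \leq \min_i a_i$. The matching lower bound $L\cdot C \geq \min_i a_i$ for every $B$-stable curve $C$ through $x^-$ would then follow from non-negativity of the intersection numbers $D_j\cdot C$ (with the case $C \subset D_j$ handled by moving $D_j$ within its linear equivalence class) together with the fact that $C$ must meet at least one boundary divisor. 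Establishing this duality between boundary divisors and $B$-stable curves through the sink is the main technical hurdle, and I expect it to emerge directly from the combinatorial structure of the simple $G$-variety $X$ around $x^-$ already exploited in the nef-cone calculation.
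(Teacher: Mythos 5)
Your overall strategy is sound and lands on the correct answer, but the route to the lower bound is genuinely different from the paper's. The paper does not test $\pi^*L-\lambda E$ against individual $B$-stable curves at all: it first computes the full nef cone of ${\rm Bl}_{x^-}X$ (Lemma \ref{lemma:blow-up}), whose key input is that $\sum_i\pi^*D_i-E$ is \emph{globally generated} --- proved by translating $D=\sum_i D_i$ by a representative of $w_0$ to produce a section through $x^-$ and then invoking Brion's Lemma 4 --- and then observes that $\pi^*L-\min_i\{a_i\}E$ is a non-negative combination of the nef generators. Your version replaces this with a direct intersection computation: $L\cdot C=\sum_j a_j(D_j\cdot C)\ge\min_i a_i$ because each $D_j$ is nef (Theorem \ref{divisors}(1)) and $\sum_j D_j$ is ample, so $\sum_j D_j\cdot C\ge 1$. (You do not need to ``move $D_j$ in its linear class'' for curves inside $D_j$; global generation already gives $D_j\cdot C\ge 0$.) This is a legitimate and arguably more elementary alternative, at the cost of having to control \emph{all} $B$-stable curves through $x^-$ rather than just the extremal ones. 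The ``decisive step'' you defer --- the existence of curves $C_i\ni x^-$ with $D_j\cdot C_i=\delta_{ij}$ --- is not a hurdle: it is exactly Theorem \ref{curves} (Brion's Theorem 2), already quoted in the paper, and every $B$-stable curve contains the unique $B$-fixed point $x^-$ by Borel's fixed point theorem, so the upper bound $\varepsilon(L;x^-)\le a_i$ is immediate.

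The one genuine gap is your justification that ${\rm mult}_{x^-}C=1$ for every $B$-stable curve $C$. Knowing that the tangent cone of $C$ at $x^-$ is $T$-stable, hence supported on a $T$-weight line, does \emph{not} force multiplicity one: a weighted cusp such as $\{y^2=x^3\}$ is stable under the $\mathbb{G}_m$-action with weights $(2,3)$, its tangent cone is the weight line $\{y=0\}$ taken with multiplicity $2$, and ${\rm mult}_0=2$. If some $B$-stable curve had ${\rm mult}_{x^-}C=m>1$, your formula $(\pi^*L-\lambda E)\cdot\widetilde{C}=L\cdot C-\lambda$ would become $L\cdot C-m\lambda$ and the claimed equality $\varepsilon(L;x^-)=\min_C L\cdot C$ would fail. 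You need to import the actual structural input (Brion's description of the $B$-stable curves, e.g., that they are the smooth rational curves $\overline{B\cdot x_i^-}$ meeting the boundary transversally, as in Theorem \ref{curves}), or else bypass the issue entirely as the paper does by proving nefness of $\pi^*L-\min_i\{a_i\}E$ through global generation rather than curve-by-curve.
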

Next, we state the corresponding result for vector bundles. Before doing so, we recall the following result from \cite{brion}: for any nonsingular simple $G$-projective variety $X$, by \cite[Lemma 6]{brion} and \cite[Theorem 2 (4)]{brion} there are only finitely many $B$-stable curves on $X$, and each of them is isomorphic to $\mathbb{P}^1$.
\begin{theorem}\label{Seshadri-constant:vect-bundle}
Let $X$ be a nonsingular simple $G$-projective variety. Let $E$ be a $B$-equivariant nef vector bundle on $X$ of rank $n$, and let $x^{-}$ be the sink of $X$. Then
 \[\varepsilon(E,x^{-})= \min_{i,C}\{ a_{i}(C)\}\]
where the minimum is taken over all finitely many $B$-stable curves $C$ on $X$ passing through $x^{-}$ and integers $\{a_{1}(C), \ldots , a_n(C)\}$ such that 
\[
E|_{C} \simeq \mathcal{O}_{C}(a_{1}(C))\oplus \cdots \oplus \mathcal{O}_{C}(a_{n}(C)).
\]
\end{theorem}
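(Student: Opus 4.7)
The plan is to realize $\varepsilon(E,x^{-})$ as a supremum over nef twists on the blow-up and then invoke Theorem~\ref{theorem: nef} to reduce the nefness question to a finite check on $B$-stable curves. Let $\pi : \widetilde{X} \to X$ denote the blow-up of $X$ at $x^{-}$ with exceptional divisor $E_0$. Since $x^{-}$ is $B$-fixed, the $B$-action lifts to $\widetilde{X}$, the divisor $E_0 \cong \mathbb{P}(T_{x^{-}}X)$ is $B$-stable, and $\pi^{*}E$ is a $B$-equivariant vector bundle on $\widetilde{X}$. By Hacon's characterisation of the Seshadri constant for nef vector bundles,
\[
\varepsilon(E,x^{-}) \;=\; \sup\bigl\{ \lambda \in \mathbb{Q}_{\geq 0} \;:\; \pi^{*}E \otimes \mathcal{O}_{\widetilde{X}}(-\lambda E_0)\ \text{is nef} \bigr\}.
\]
For $\lambda \in \mathbb{Z}_{\geq 0}$ the object inside is a genuine $B$-equivariant vector bundle, so Theorem~\ref{theorem: nef} applied with $\Gamma = B$ reduces the nefness of $\pi^{*}E \otimes \mathcal{O}_{\widetilde{X}}(-\lambda E_0)$ to its nefness on every $B$-stable closed curve of $\widetilde{X}$; rational $\lambda$ are handled by passing to a tensor/symmetric power of suitably large order.

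Next, I would classify the $B$-stable curves of $\widetilde{X}$. They split into two types: (i) strict transforms $\widetilde{C}$ of $B$-stable curves $C \subset X$ passing through $x^{-}$, and (ii) $B$-stable curves contained entirely in $E_0$. The collection in (i) is finite, since on a simple $G$-projective variety the $T$-fixed points and $T$-stable curves form finite sets and every $B$-stable curve is automatically $T$-stable. For a curve of type (i): because $X$ is nonsingular, $C$ is smooth at $x^{-}$ with $\mathrm{mult}_{x^{-}}C = 1$, hence $\widetilde{C} \cdot E_0 = 1$ and $\widetilde{C} \cong \mathbb{P}^1$. Using the $B$-equivariant splitting $E|_C \cong \bigoplus_i \mathcal{O}_{C}(a_{i}(C))$, the restriction becomes
\[
\bigl(\pi^{*}E \otimes \mathcal{O}_{\widetilde{X}}(-\lambda E_0)\bigr)\big|_{\widetilde{C}} \;\cong\; \bigoplus_{i} \mathcal{O}_{\mathbb{P}^1}\bigl(a_{i}(C) - \lambda\bigr),
\]
which is nef precisely when $\lambda \leq \min_{i} a_{i}(C)$. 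For a curve of type (ii): $\pi^{*}E|_{E_0}$ is trivial, being pulled back from a point, so its restriction to any curve in $E_0$ is trivial; meanwhile $\mathcal{O}_{\widetilde{X}}(-\lambda E_0)|_{E_0} \cong \mathcal{O}_{E_0}(\lambda)$ is a nonnegative multiple of the hyperplane class and hence nef for $\lambda \geq 0$. Therefore type-(ii) curves impose no further constraint, and intersecting the type-(i) conditions gives $\varepsilon(E, x^{-}) = \min_{i,C} a_{i}(C)$.

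The principal technical obstacle lies in the passage from $\mathbb{Q}$-twisted bundles to genuine vector bundles in order to invoke Theorem~\ref{theorem: nef}: one must either clear denominators on tensor/symmetric powers with integral exponents or phrase the whole nefness criterion on $\mathbb{P}(\pi^{*}E)$ as a line bundle statement and adapt Theorem~\ref{theorem: nef} accordingly. A subsidiary but essential point is the bijection between the strict transform operation and the set of $B$-stable curves on $X$ through $x^{-}$, which ensures that the finite minimum in the theorem is well-defined and equals the value computed from the blow-up.
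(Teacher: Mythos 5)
Your proposal is correct and follows essentially the same route as the paper: blow up at $x^{-}$, lift the $B$-action, use Theorem~\ref{theorem: nef} to reduce nefness of the twist to the finitely many $B$-stable curves of the blow-up, and then split into curves inside the exceptional divisor (no constraint) versus strict transforms of $B$-stable curves through $x^{-}$ (each giving $\lambda \le \min_i a_i(C)$ via Grothendieck's splitting). The ``technical obstacle'' you flag is resolved exactly as you suggest in your second alternative — the paper phrases everything as nefness of the line bundle $\xi - \lambda q^{*}\mathcal{O}(W_{x^{-}})$ on $\mathbb{P}({\rm Bl}_{x^{-}}^{*}E)$ and applies the equivariant nef criterion there, avoiding $\mathbb{Q}$-twisted bundles altogether.
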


The paper is organised as follows: In section \ref{Eq:nefVB}, we prove Theorem \ref{theorem: nef} and Theorem \ref{theorem: ample}. In section \ref{SC}, we first describe the nef cone of the blow-up of a nonsingular simple $G$-projective vareity at the sink (see Lemma \ref{lemma:blow-up}), and use this description to prove Theorem \ref{SC:line-bundle}. We also prove Theorem \ref{Seshadri-constant:vect-bundle} in this section.

\section{Equivariant nef vector bundle}\label{Eq:nefVB}

In this section, we provide a general criterion for the nefness of $\Gamma$-equivariant vector bundle on a complete $\Gamma$-variety $X$, where $\Gamma$ is a connected solvable group. Before proceeding, we first recall and restate the following theorem from \cite{fmss} for the reader’s convenience.
\begin{theorem}[Theorem 1, \cite{fmss}]\label{fmss}
    Let $\Gamma$ be a connected solvable linear algebraic group acting on a complete variety $X$. Then the canonical homomorphism 
    \[
    A_k^{\Gamma}(X)\to A_k(X)
    \]
    is an isomorphism for all $1\le k \le \dim X$, where $A_k(X)$ denotes the Chow group of $k$-cycles modulo rational equivalence, and $A_k^{\Gamma}(X)$ denotes the $\Gamma$-stable $k$-cycles modulo rational equivalence.
\end{theorem}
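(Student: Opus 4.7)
The plan is to proceed by induction on $\dim \Gamma$, exploiting the structure of connected solvable groups. The base case $\dim \Gamma = 0$ is tautological, since $\Gamma$ acts trivially and every subvariety is automatically $\Gamma$-stable. For the inductive step, since $\Gamma$ is connected and solvable, we choose a closed connected normal subgroup $\Gamma' \subset \Gamma$ of codimension one with quotient $\Gamma/\Gamma'$ isomorphic to $\mathbb{G}_m$ or $\mathbb{G}_a$. By induction, $A_k^{\Gamma'}(X) \cong A_k(X)$, so it suffices to prove that the further refinement $A_k^{\Gamma}(X) \to A_k^{\Gamma'}(X)$ is an isomorphism.

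For surjectivity at the inductive step, given a $\Gamma'$-invariant $k$-cycle $Z$, the goal is to exhibit a $\Gamma$-invariant cycle $Z_0$ rationally equivalent to $Z$. Lifting a generator of $\Gamma/\Gamma'$ to a one-parameter subgroup $\lambda$ in $\Gamma$, I would form the flat family $\{\lambda(t) \cdot Z\}_{t}$ parametrized by $\mathbb{G}_m$ or $\mathbb{G}_a$, compactify the parameter to $\mathbb{P}^1$, and take the flat limit at the added point to produce a cycle $Z_0$. Since the fibers of a flat family over $\mathbb{P}^1$ are rationally equivalent in $A_k(X)$, we obtain $[Z_0] = [Z]$. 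Normality of $\Gamma'$ in $\Gamma$ ensures each $\lambda(t) \cdot Z$ remains $\Gamma'$-invariant, and the limit $Z_0$ inherits this property; by construction $Z_0$ is also $\lambda$-invariant. Since $\Gamma = \langle \Gamma', \lambda \rangle$, we conclude $Z_0$ is $\Gamma$-invariant, which gives surjectivity.

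For injectivity, the argument runs in parallel: given a $\Gamma$-invariant cycle $\alpha$ vanishing in $A_k(X)$, write $\alpha = \sum_i \mathrm{div}_{W_i}(f_i)$ for rational functions $f_i$ on $(k+1)$-dimensional subvarieties $W_i$, and apply the limit construction above to each pair $(W_i, f_i)$ to produce $\Gamma$-invariant $W_i'$ together with $\Gamma$-semi-invariant rational functions $f_i'$ whose divisors realize $\alpha$ as a boundary in $Z_k^{\Gamma}(X)$. The main obstacle is this last step: one must verify that the flat degeneration respects the divisor of a rational function and that on the limiting $\Gamma$-invariant subvariety a semi-invariant function with the correct divisor can be chosen. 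This is the heart of the argument and hinges on the simple character theory of $\mathbb{G}_m$ and $\mathbb{G}_a$, through which semi-invariance can be tracked across the degeneration so that the equivariantization of the rational equivalence succeeds.
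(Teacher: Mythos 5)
First, note that the paper itself offers no proof of this statement: it is quoted (with minor rewording) from \cite{fmss}, so the only ``proof'' in the paper is the citation. Your sketch is therefore being compared against the original argument of Fulton--MacPherson--Sottile--Sturmfels, and in outline you have reconstructed exactly their strategy: induct on $\dim\Gamma$ via a codimension-one connected normal subgroup $\Gamma'$ with quotient $\mathbb{G}_a$ or $\mathbb{G}_m$, and use flat limits along a one-parameter subgroup covering the quotient. Your surjectivity step is essentially correct and complete: the limit cycle $Z_0$ is rationally equivalent to $Z$ via the family over $\mathbb{P}^1$, it is $\Gamma'$-stable by normality and $\lambda$-stable by the usual reparametrization argument, hence $\Gamma$-stable; and since $\Gamma$ is connected it stabilizes each component of the support. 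This is all the paper actually uses (in the proof of Theorem \ref{theorem: nef} only the surjectivity of $A_k^{\Gamma}(X)\to A_k(X)$ is invoked), and if one reads $A_k^{\Gamma}(X)$ as the subgroup of $A_k(X)$ generated by $\Gamma$-stable cycles, your argument already suffices.

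However, under the definition that gives the theorem its content in \cite{fmss} --- invariant cycles modulo divisors of $\Gamma$-eigenfunctions on invariant $(k+1)$-dimensional subvarieties --- your injectivity step has a genuine gap, and you have correctly located it but not filled it. The assertion that the degeneration of a pair $(W_i,f_i)$ yields a $\Gamma$-invariant $W_i'$ carrying a semi-invariant function $f_i'$ ``with the correct divisor'' is precisely the hard content of Theorem 1 of \cite{fmss}, and it is not true in the naive form you state: the flat limit $W_0$ of $W_i$ is in general reducible and non-reduced, the function $f_i$ need not specialize to a rational function on $W_0$ at all, and even on a fixed invariant subvariety the divisor of an arbitrary rational function is generally \emph{not} the divisor of any single eigenfunction --- one only recovers the class after introducing auxiliary invariant subvarieties of dimension $k+2$ (closures of $\Gamma$-orbits of the $W_i$, or of graphs of the $f_i$ in $\mathbb{P}^1\times X$) and further eigenfunction relations on those. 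This is where the original proof spends most of its effort, and the appeal to ``the simple character theory of $\mathbb{G}_m$ and $\mathbb{G}_a$'' does not substitute for that construction (indeed $\mathbb{G}_a$ has no nontrivial characters, so its eigenfunctions are just invariants, which makes the tracking claim even less automatic). As a proof of the isomorphism statement, the proposal is therefore incomplete at its acknowledged ``heart''.
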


\begin{proof}[Proof of Theorem \ref{theorem: nef}]
If $E$ is nef, then clearly $E|_{C}$ is nef for every closed curve $C$ on $X.$
	
	Conversely, assume that ${E}$ is a $\Gamma$-equivariant vector bundle on $X$ such that its restriction $E|_{C}$ to every $\Gamma$-stable curve $C$ in $X$ is nef.
	
	Let 
 \[
 \pi : \mathbb{P}(E)\to X
 \] 
 be the projective bundle over $X$ parametrizing the hyperplanes in the fibers of $E.$ Since $E$ is $\Gamma$-equivariant, $\mathbb{P}(E)$ is an $\Gamma$-variety and $\pi$ is $\Gamma$-equivariant. The
	tautological relative ample line bundle over $\mathbb{P}(E)$ will be denoted by $\mathcal{O}_{\mathbb{P}(E)}(1).$ To prove
	that $E$ is nef, we need to show that $\mathcal{O}_{\mathbb{P}(E)}(1)|_{\widetilde{C}}$ is nef for every closed curve $\widetilde{C}$ in  $\mathbb{P}(E).$
	Note that if $\pi(\widetilde{C})$ is a point, then $\mathcal{O}_{\mathbb{P}(E)}(1)|_{\widetilde{C}}$ is ample, because $\mathcal{O}_{\mathbb{P}(E)}(1)$ is relatively ample.
	
	Now assume that $\widetilde{C}$ is a curve in $\mathbb{P}(E)$ such that $\pi(\widetilde{C})$ is not a point.
	Since $\mathbb{P}(E)$ is a complete $\Gamma$-variety, by Theorem \ref{fmss} the cycle of $\widetilde{C}$ is rationally equivalent to a $\Gamma$-stable cycle $\widetilde{C}'.$ Further, since $\pi$ is $\Gamma$-equivariant, $\pi(\widetilde{C}')$ is a $\Gamma$-stable curve in $X.$ Therefore by the assumption, the restriction $E|_{\pi(\widetilde{C}')}$ is nef. Hence we have \[{\rm degree}(\mathcal{O}_{\mathbb{P}(E)}(1)|_{\widetilde{C}'})\ge 0.\] Since $\widetilde{C}$ is rationally equivalent to $\widetilde{C}'$, we have   
	\[{\rm degree}(\mathcal{O}_{\mathbb{P}(E)}(1)|_{\widetilde{C}})\ge 0.\] This proves that $E$ is nef if $E|_{C}$ is nef for every closed $\Gamma$-stable curve $C$ in $X.$	
\end{proof}
We now prove the following result, which will be used in the proof of Theorem \ref{Seshadri-constant:vect-bundle}.
\begin{prop} \label{Blow up: Proposition 1.2}
Let \( x \) be a nonsingular \( \Gamma \)-fixed point of \( X \), and let
\[
\mathrm{Bl}_{x} : \mathrm{Bl}_{x} X \to X
\]
denote the blow-up of \( X \) at \( x \). Then the following statements hold:
\begin{enumerate}
    \item The action of \( \Gamma \) lifts to \( \mathrm{Bl}_{x} X \).

    \item Let \( F \) be a \( \Gamma \)-equivariant vector bundle on \( \mathrm{Bl}_{x} X \). Then \( F \) is nef if and only if its restriction \( F|_{\widetilde{C}} \) to every \( \Gamma \)-stable closed curve \( \widetilde{C} \subset \mathrm{Bl}_{x} X \) is nef.

    \item Let \( W_x \) denote the exceptional divisor of the blow-up \( \mathrm{Bl}_{x} \). Let \( E \) be a \( \Gamma \)-equivariant vector bundle on \( X \). Then, for every integer \( m \), the vector bundle
    \[
    \mathrm{Bl}_{x}^{*}E \otimes \mathcal{O}_{\mathrm{Bl}_{x} X}(m W_{x})
    \]
    is \( \Gamma \)-equivariant.
\end{enumerate}
\end{prop}
\begin{proof}
Since $x$ is a nonsingular $\Gamma$-fixed point, the group $\Gamma$ acts on the tangent space $T_{x}X.$ 
Note that the exceptional divisor of the blow-up ${\rm Bl}_{x}$ is isomorphic to $\mathbb{P}(T_{x}(X))$, so $\Gamma$ acts on the exceptional divisor of the blow-up via its linear action on $T_{x}X.$  As ${\rm Bl}_{x}$ is an isomorphism outside the exceptional divisor, it follows that the $\Gamma$-action lifts to the entire space ${\rm Bl}_{x} X$. In fact, we have the following commutative diagram, \[
\xymatrix{
 \Gamma \times {\rm Bl}_{x} X \ar@{-->}[r]^{} \ar[d]_{(Id, {\rm Bl}_x)} & {\rm Bl}_{x} X \ar[d]_{{\rm Bl}_x} \\
\Gamma \times X \ar[r]^{} & X}
\]
where the map $\Gamma \times {\rm Bl}_{x} X \longrightarrow {\rm Bl}_{x} X$ is uniquely determined by the universal property of the blow-up \cite[Theorem 7.14]{Har}. 

This completes the proof of part (1).

Since the action of $\Gamma$ lifts to ${\rm Bl}_{x} X$, the proof of (2) follows directly from Theorem \ref{theorem: nef}.

Finally, since $\Gamma$ acts on the exceptional divisor $W_{x}$ of ${\rm Bl}_{x},$ we conclude that $\mathcal{O}_{{\rm Bl}_{x} X}(W_{x})^{\otimes m}$ is a $\Gamma$-equivariant line bundle for every integer $m.$ 

Hence, if $E$ is a $\Gamma$-equivariant vector bundle on $X$ then 
\[
{\rm Bl}_{x}^{*}E \otimes \mathcal{O}_{{\rm Bl}_{x} X}(W_x)^{\otimes m}
\]
is a $\Gamma$-equivariant vector bundle on ${\rm Bl}_{x} X$ for every integer $m.$

This completes the proof of part (3).
\end{proof}

Following the argument presented in \cite[Theorem 2.1]{HMP}, we obtain the following result.

\begin{proof}[Proof of Theorem \ref{theorem: ample}]
The restriction of an ample vector bundle to a closed subvariety is always ample. Conversely, assume that the restriction of $E$ to every $B$-stable curve is ample.
Since $X$ is a nonsingular simple $G$-projective variety, it follows from \cite[Lemma 6]{brion} that $X$ contains only finitely many $B$-stable curves, and each of them is isomorphic to $\mathbb{P}^1$.
Let us fix a $B$-equivariant ample line bundle $L$ on $X$ and choose an integer $m$ greater than $L\cdot C$ for every $B$-stable curve $C$ in $X.$ The restriction of ${\rm Sym}^{m}(E)\otimes L^{-1}$ to each $B$-stable curve $C$ is nef, and hence ${\rm Sym}^{m}(E)\otimes L^{-1}$ is nef by Theorem \ref{theorem: nef}. It follows that ${\rm Sym}^{m}(E)$ is ample, and therefore $E$ is ample as well (see \cite[Proposition 6.2.11 and Theorem 6.1.15]{Laz}). 
\end{proof}

Now we consider flag varieties, which are the simplest examples of nonsingular simple $G$-projective varieties. These varieties are rational homogeneous projective varieties under the actions of semisimple groups. To state our result in this direction, we first introduce the following notation.

For a representation $V$ of $B$, the associated vector bundle on $G/B$ is denoted by $\mathbb{V}$, and it is defined as 
\[
\mathbb{V} = G \times^{B} V = G\times V/\sim
\]
where the equivalence relation is given by the $B$-action: ($g, v)\sim (gb, b^{-1}v)$ for $b\in B$, $g \in G$ and $v \in V.$ This construction describes a $G$-equivariant vector bundle over the flag variety $G/B$, where $G$ acts on the left. Moreover, every $G$-equivariant vector bundle on $G/B$ arises in this way.

For $G/B$, the $B$-stable irreducible closed curves are precisely the Schubert varieties of dimension one, commonly referred to as Schubert curves. 

For these varieties, our Theorem \ref{theorem: nef} and Theorem \ref{theorem: ample} imply the following:

\begin{corollary}\label{cor:ample/flag}
A $G$-equivariant vector bundle $\mathbb{V}$ on $G/B$ is nef (respectively, ample) if and only if its restriction to every Schubert curve is nef (respectively, ample).
\end{corollary}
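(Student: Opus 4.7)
The plan is to observe that $G/B$ falls precisely within the scope of Theorems \ref{theorem: nef} and \ref{theorem: ample}, so the corollary is essentially a direct application, with only a small bookkeeping issue to settle. First I would verify the hypotheses: $G$ acts transitively on the nonsingular projective variety $G/B$, so the whole space is itself the unique closed $G$-orbit, making $G/B$ a nonsingular (hence normal) simple $G$-projective variety. Any $G$-equivariant vector bundle $\mathbb{V}$ is in particular $B$-equivariant under the restriction of the $G$-action to the Borel subgroup.

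Next I would invoke the two main theorems. Theorem \ref{theorem: nef} applied with $\Gamma=B$ gives that $\mathbb{V}$ is nef if and only if $\mathbb{V}|_{C}$ is nef for every $B$-stable closed curve $C\subset G/B$; Theorem \ref{theorem: ample} gives the analogous statement for ampleness. This already reduces the problem to understanding the $B$-stable closed curves in $G/B$.

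Finally, I would translate the statement from arbitrary $B$-stable closed curves to Schubert curves. The paragraph preceding the corollary records that the $B$-stable irreducible closed curves of $G/B$ are precisely the one-dimensional Schubert varieties. For a possibly reducible $B$-stable closed curve $C$, the connectedness of $B$ forces $B$ to fix each irreducible component of $C$ setwise, so every component is a Schubert curve; moreover nefness (resp.\ ampleness) of a vector bundle on a reducible curve is tested component by component. Thus the criterion over all $B$-stable closed curves coincides with the criterion over Schubert curves, completing the proof. The only potentially nontrivial step is this reduction from reducible to irreducible $B$-stable curves, but it is routine and follows from the standard decomposition argument for connected group actions.
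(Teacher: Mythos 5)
Your proposal is correct and matches the paper's approach: the paper's proof is simply ``Follows from Theorems \ref{theorem: nef} and \ref{theorem: ample},'' and you have filled in exactly the intended details (verifying that $G/B$ is a normal simple $G$-projective variety, that $G$-equivariance implies $B$-equivariance, and that the $B$-stable irreducible closed curves are the Schubert curves). Nothing further is needed.
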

\begin{proof}
Follows from Theorem \ref{theorem: nef} and Theorem \ref {theorem: ample}.
\end{proof}
\begin{remark}
By \cite{bhn}, the $G$-equivariant vector bundle $E$ on 
$G/B$ is ample (respectively, nef) if and only if its restriction to $T$-stable curves on $G/B$ are ample (respectively, nef). However, by Corollary \ref{cor:ample/flag}, it suffices to check this condition only on $B$-stable curves, which is a smaller subset of $T$-stable curves.
\end{remark}

\section{Seshadri constant on a simple $G$-projective variety}\label{SC}
In this section, we focus on the positivity of any nef line bundle (not necessarily equivariant) on a nonsingular simple $G$-projective variety $X$. Further, we focus on the positivity of a $B$-equivariant vector bundle $E$ on such a nonsingular simple $G$-projective variety $X$. 

We first recall the following crucial result from \cite[Theorem 1]{brion} that determines the nef cone of divisors of $X$ as follows:

 \begin{theorem}[Nef cone]\label{divisors}
Let $X$ be a nonsingular simple $G$-projective variety. Let $B^-$ be the Borel subgroup of $G$ opposite to $B$ determined by $T$, let $x^-\in X$ be the $B$-fixed point, $X^-\subseteq X$ its unique $B^-$-stable open affine neighborhood, and $D_1,\ldots, D_r$ be the irreducible components of $X \setminus X^-$. Then the following hold:
\begin{enumerate}
\item 
$D_1,\ldots,D_r$ are globally generated Cartier divisors. Their
linear equivalence classes form a basis of the Picard group of $X$. 
\item
Every ample divisor on $X$ is linearly equivalent to a
unique linear combination of $D_1,\ldots,D_r$ with positive
integer coefficients. 
\item
Every nef divisor on $X$ is linearly equivalent to a
unique linear combination of $D_1,\ldots,D_r$ with non-negative
integer coefficients. 
\end{enumerate}
\end{theorem}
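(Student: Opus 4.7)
The plan is to combine Białynicki-Birula theory with a duality argument between $B$-stable curves and $B^{-}$-stable divisors, leveraging Theorem~\ref{theorem: nef} and Theorem~\ref{theorem: ample} for the positivity portions. Since $X$ is smooth and projective, a generic one-parameter subgroup of $T$ induces a Białynicki-Birula decomposition of $X$; the unique minimum stratum is a $B^{-}$-stable open neighborhood of the sink $x^{-}$ which, because $x^{-}$ is a smooth fixed point, is equivariantly isomorphic to the tangent space $T_{x^{-}}X\cong \AAA^{n}$. By simplicity, this stratum coincides with $X^{-}$, and its complement is a union of $B^{-}$-stable prime (hence Cartier) divisors $D_1,\ldots,D_r$.

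For part (1), $\operatorname{Pic}(X^{-})=0$ since $X^{-}\cong \AAA^{n}$, and excision
\[
\bigoplus_{i=1}^{r}\ZZ\cdot[D_i]\longrightarrow \operatorname{Pic}(X)\longrightarrow \operatorname{Pic}(X^{-})\longrightarrow 0
\]
forces the $[D_i]$ to generate $\operatorname{Pic}(X)$. To upgrade this to a basis, I would exhibit $B$-stable curves $C_1,\ldots,C_r$ dual to the divisors in the sense that $D_i\cdot C_j=\delta_{ij}$, by choosing for each $j$ a one-dimensional Białynicki-Birula stratum running into $x^{-}$ whose generic point lies on $D_j$ and on no other $D_i$. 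Global generation of $\mathcal{O}_X(D_i)$ follows by a $G$-translation argument: the tautological section $s_i$ cutting out $D_i$ is nonvanishing on $X^{-}$, and $G\cdot X^{-}=X$ since otherwise its complement would be a $G$-stable closed subset avoiding the unique closed orbit, which however meets $X^{-}$ at $x^{-}$. Hence the $G$-translates of $s_i$ have empty common vanishing locus.

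For (2) and (3), write $L=\sum_i a_i [D_i]$ using the basis from (1); intersecting with $C_j$ yields $L\cdot C_j=a_j$. Any complete $B$-stable curve $C$ contains the unique $B$-fixed point $x^{-}\in X^{-}$, so $C$ cannot lie in any $D_i$, giving $D_i\cdot C\geq 0$ for all $i$; and since $X^{-}$ is affine while $C$ is complete, $C\cap D_i\neq \emptyset$ for at least one $i$. Consequently $a_i\geq 0$ (resp.\ $a_i>0$) implies $L\cdot C\geq 0$ (resp.\ $>0$) on every $B$-stable curve, so by Theorem~\ref{theorem: nef} (resp.\ Theorem~\ref{theorem: ample}) applied after choosing a Sumihiro-type $B$-linearization on $L$, together with the finiteness of $B$-stable curves from \cite[Lemma 6]{brion}, the line bundle $L$ is nef (resp.\ ample). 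The converses follow immediately from $L\cdot C_j=a_j$.

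The main obstacle, in my view, is establishing the dual pairing $D_i\cdot C_j=\delta_{ij}$: while the construction of the $C_j$ inside the one-dimensional Białynicki-Birula strata is geometrically natural, the transversality and disjointness that make the pairing the identity require careful control of the cell incidences, equivalent in the end to a combinatorial analysis of the $T$-weights of $T_{x^{-}}X$. Once this duality and the affineness $X^{-}\cong \AAA^{n}$ are in place, the remaining ingredients—excision, $G$-translation for global generation, and the reduction of positivity to intersection with $B$-stable curves—are formal.
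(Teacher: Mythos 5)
The paper does not prove Theorem \ref{divisors} at all: it is imported verbatim from \cite{brion} (Theorem 1 there), so your proposal can only be measured against Brion's original argument. Measured that way, your outline follows essentially the same route: the Bia{\l}ynicki--Birula description of $X^{-}$ as an affine space, excision to see that the $[D_i]$ generate $\operatorname{Pic}(X)$, $G$-translation of the canonical sections for global generation (this is exactly Brion's Lemma 4, and your observation $\bigcap_{g\in G} gD_i\subseteq X\setminus G\cdot X^{-}=\emptyset$ is correct), and the reduction of nefness and ampleness to intersection numbers with the finitely many $B$-stable curves via Theorems \ref{theorem: nef} and \ref{theorem: ample} (legitimate and non-circular, since those theorems are proved without Theorem \ref{divisors}; the Sumihiro step of replacing $L$ by a $B$-linearized power deserves a sentence but is standard).

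The genuine gap is the one you flag yourself, and it is not a minor transversality check: the existence of curves $C_1,\ldots,C_r$ with $D_i\cdot C_j=\delta_{ij}$ is precisely the content of \cite[Theorem 2]{brion} (reproduced as Theorem \ref{curves} in this paper), and everything beyond ``the $[D_i]$ generate'' hangs on it --- the linear independence in (1), and the forward implications ample $\Rightarrow a_i>0$ and nef $\Rightarrow a_i\ge 0$ in (2) and (3). Your description of the $C_j$ as one-dimensional Bia{\l}ynicki--Birula strata running into $x^{-}$ is not Brion's construction (he takes $C_i=\overline{B\cdot x_i^{-}}$ with $x_i^{-}$ the sink of $D_i$), and you offer no argument for why such a curve meets $D_j$ exactly once and transversally while avoiding every other $D_i$; this is where the real work of the theorem lives. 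For linear independence alone one can bypass the curves: a principal divisor $\operatorname{div}(f)=\sum_i a_iD_i$ forces $f$ to be a unit, hence a constant, on $X^{-}\cong\mathbb{A}^{n}$, so all $a_i=0$. But the curves are unavoidable for the forward directions of (2) and (3), so the duality must actually be proved. A second, smaller gap: your claim that the minimal stratum of a generic one-parameter subgroup is \emph{open} and coincides with $X^{-}$ presupposes that all $T$-weights of $T_{x^{-}}X$ lie in an open half-space; this is again a lemma of Brion and is not automatic from $x^{-}$ being $B$-fixed.
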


We rewrite the Theorem \cite[Theorem 2]{brion} determining the cone of effective 1-cycle, as follows:
\begin{theorem}[Cone of effective 1-cycles]\label{curves}
    Let $X$ be a nonsingular simple $G$-projective variety and let $D_i$ be as in Theorem \ref{divisors}. Then
    \begin{enumerate}
    	\item The sink of each $D_i$ is a unique point $x_i^-$, isolated in $X^T$. 
        \item The cone ${\rm NE}(X)$ is generated by the classes of $C_1,\ldots,C_r$, where $C_i=\overline{B\cdot x_i^-}$.
        \item Every $D_i$ intersects $C_i$ transversally and $D_i \cdot C_j = \delta_{ij}$. Moreover, all $C_i$ are isomorphic to $\mathbb{P}^1$, and their classes form a basis of the group $N_1(X)$.
    \end{enumerate}
\end{theorem}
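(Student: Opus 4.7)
The plan is to deduce both parts by combining Theorem~\ref{divisors}(1), which identifies $D_1,\dots,D_r$ as a basis of $\mathrm{Pic}(X)_\mathbb{Q}$, with the global generation of each $D_i$, using the intersection pairing as a duality. First I would pin down the intersection numbers $D_i\cdot C_j$; then (2) would read off from the perfect pairing between $N^1(X)_\mathbb{Q}$ and $N_1(X)_\mathbb{Q}$; and finally (1) would follow by showing that every irreducible curve pairs non-negatively against each $D_j$.

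For the intersection numbers, I would begin by observing that $C_j\cong\mathbb{P}^1$: since $C_j$ is a one-dimensional $B$-orbit closure in the smooth variety $X$ and $B$ is connected solvable, $C_j$ is rational with exactly two $B$-fixed points. One of these is $x_j^-\in D_j$, and the other, by uniqueness of the sink of $X$, must be $x^-\in X^-$. Since $X^-=X\setminus\bigcup_i D_i$ is disjoint from every $D_i$, the only intersection of $C_j$ with the boundary $\bigcup_i D_i$ is the point $x_j^-$, which lies in $D_j$ and not in $D_i$ for $i\ne j$. This gives $D_i\cdot C_j=0$ for $i\ne j$, while for $i=j$ transversality would follow from the smoothness of $X$ at $x_j^-$ together with a $T$-weight analysis on $T_{x_j^-}X$ identifying the tangent line $T_{x_j^-}C_j$ as a weight subspace complementary to $T_{x_j^-}D_j$, yielding $D_j\cdot C_j=1$.

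Once $D_i\cdot C_j=\delta_{ij}$ is established, part (2) is immediate: the classes $[C_1],\dots,[C_r]$ are dual to the basis $\{[D_i]\}$ under the intersection pairing, so they themselves form a basis of $N_1(X)_\mathbb{Q}$. For part (1), given any irreducible curve $\Sigma\subseteq X$, I would write $[\Sigma]=\sum_j n_j[C_j]$ and invoke duality to read off $n_j=D_j\cdot\Sigma$. Because each $D_j$ is globally generated by Theorem~\ref{divisors}(1), some effective divisor $D_j'\sim D_j$ avoids $\Sigma$, forcing $D_j\cdot\Sigma=D_j'\cdot\Sigma\ge 0$. Hence every $n_j$ is non-negative and $[\Sigma]\in\sum_j\mathbb{R}_{\ge 0}[C_j]$, proving that $\mathrm{NE}(X)$ is generated by $[C_1],\dots,[C_r]$.

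The main obstacle I anticipate is the transversality claim $D_j\cdot C_j=1$ at $x_j^-$. The intuition is clear—the tangent line to $C_j$ ought to be the unique ``outward'' weight direction relative to $D_j$—but making it rigorous requires the local structure theory at $x_j^-$, for instance via a Bia\l ynicki-Birula cell decomposition for a suitable one-parameter subgroup of $T$, or via the $T$-equivariant slice theorem near $x_j^-$. Everything else in the argument is then a formal consequence of intersection-theoretic duality and global generation, so the bulk of the technical work is concentrated at this single local computation.
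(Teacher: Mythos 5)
First, a point of comparison: the paper does not prove this statement at all --- it is imported verbatim from Brion (the sentence preceding it reads ``We rewrite the Theorem [Theorem 2, brion] \dots''), so there is no internal argument to measure yours against. Judged on its own, your skeleton is the standard and correct one: establish $D_i\cdot C_j=\delta_{ij}$, deduce (2) by duality of bases, and deduce (1) from the nefness of the globally generated $D_j$ together with the fact that the dual of the simplicial cone $\langle D_1,\dots,D_r\rangle$ is the cone on the dual basis. Those last two steps are fine as written.

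The gaps are all concentrated in the first step, and they are genuine. (i) Your fixed-point bookkeeping contradicts the setup: $X$ has a \emph{unique} $B$-fixed point $x^-$, so $C_j$ cannot have ``exactly two $B$-fixed points,'' and $x_j^-$ is not $B$-fixed (if it were, $C_j=\overline{B\cdot x_j^-}$ would be a point). The points $x_j^-$ and $x^-$ are the two $T$-fixed points of $C_j$, with $C_j=B\cdot x_j^-\sqcup\{x^-\}$. This is not merely terminological: your conclusion that $C_j$ meets the boundary only at $x_j^-$ tacitly assumes $C_j\setminus\{x_j^-,x^-\}\subseteq X^-$, which you never show. The correct route is that $C_j\cap D_i$ is $T$-stable (intersection of a $B$-stable curve with a $B^-$-stable divisor) and proper in $C_j$ (since $x^-\notin D_i$), hence a finite set of $T$-fixed points of $C_j$, hence contained in $\{x_j^-\}$; and even then the assertion $x_j^-\notin D_i$ for $i\neq j$ is stated without justification. (ii) More seriously, the heart of the computation --- that a local equation of $D_j$ vanishes to order exactly one on $C_j$ at $x_j^-$, equivalently $T_{x_j^-}C_j\not\subseteq T_{x_j^-}D_j$ --- is precisely the step you defer to ``local structure theory'' and never carry out; the smoothness of $C_j$ at $x^-$ (needed for $C_j\simeq\mathbb{P}^1$ rather than a singular rational curve) is in the same boat. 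Without these one only gets $D_i\cdot C_j=m_j\,\delta_{ij}$ with $m_j\ge 1$, which still yields the cone statement in (1) but not the transversality, the $\delta_{ij}$, or the integral basis claim in (2). Since these local facts are the actual content of Brion's theorem, the write-up as it stands reduces the statement to its hardest part rather than proving it.
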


Let $X$ be a nonsingular simple $G$-projective variety and let 
\[
{\rm Bl}_{x^-} : {\rm Bl}_{x^-} X \longrightarrow X
\]
be the blow-up map at the sink $x^-$ of $X$. Denote $D := \sum\limits_{i=1}^rD_i$. Let $H = {\rm Bl}_{x^-}^*D$. Note that the strict transform $\tilde{C_i}$ of the curve $C_i$ is given by the class ${\rm Bl}_{x^-}^{*} C_i - l$, 
where $l$ denotes the class of a line in the exceptional divisor $E$ (see \cite[Corollary 6.7.1]{Fulton}).
Before proving our main theorem, we prove the following. 
\begin{lemma}\label{lemma:blow-up}
	Let $X$ be a nonsingular simple $G$-projective variety and let 
	\[
	{\rm Bl}_{x^-} : {\rm Bl}_{x^-} X \longrightarrow X
	\]
	be the blow-up map at the sink $x^-$. Then the line bundles
	\[
	{\rm Bl}_{x^-}^*D_1, {\rm Bl}_{x^-}^*D_2, \ldots, {\rm Bl}_{x^-}^*D_r,
	\]
	and $H - E$ are nef. Moreover,
	\[
	 {\rm Nef}({\rm Bl}_{x^-} X) = \langle{\rm Bl}_{x^-}^*D_1, {\rm Bl}_{x^-}^*D_2, \ldots, {\rm Bl}_{x^-}^*D_r, H- E \rangle,
	\]
	and its dual, the Mori cone of curves is generated by classes $\tilde{C_i}$ and $l$:
	\[
	 {\rm NE}({\rm Bl}_{x^-} X) = \langle\tilde{C_1} , \tilde{C_2} , \ldots, \tilde{C_r} , l \rangle,
	\]
	where $\tilde{C_i} := {\rm Bl}_{x^-}^{*} C_i - l$ with $l$ being the class of a line in $E$.
\end{lemma}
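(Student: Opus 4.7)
The plan is to organize the proof around the duality between the nef and Mori cones together with an explicit intersection-pairing computation. I would first verify that $\{{\rm Bl}_{x^-}^*D_i\}_{i=1}^r \cup \{\sum_i {\rm Bl}_{x^-}^*D_i - E\}$ and $\{\tilde{C}_i\}_{i=1}^r \cup \{e\}$ are dual bases of $N^1({\rm Bl}_{x^-}X)_\RR$ and $N_1({\rm Bl}_{x^-}X)_\RR$ respectively. Since $x^- \in X^-$ is disjoint from every $D_i$, the pullback ${\rm Bl}_{x^-}^*D_i$ does not meet $E$, giving ${\rm Bl}_{x^-}^*D_i \cdot e = 0$, and then ${\rm Bl}_{x^-}^*D_i \cdot \tilde{C}_j = {\rm Bl}_{x^-}^*D_i \cdot {\rm Bl}_{x^-}^*C_j = D_i \cdot C_j = \delta_{ij}$ by the projection formula and Theorem \ref{curves}(2). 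Combined with ${\rm Bl}_{x^-}^*C_j = \tilde{C}_j + e$ in $N_1$ (valid because each $C_j \cong \PP^1$ is smooth at $x^-$) and the standard $E \cdot e = -1$ coming from $\mathcal{O}_{{\rm Bl}_{x^-}X}(E)|_E \cong \mathcal{O}_E(-1)$, one obtains $E \cdot \tilde{C}_j = 1$, whence the full pairing matrix is the identity.

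Next, I would establish global generation of the proposed divisor generators, which automatically yields nefness. Each ${\rm Bl}_{x^-}^*D_i$ is globally generated because pullback preserves global generation and $D_i$ is globally generated by Theorem \ref{divisors}(1). For $\sum_i {\rm Bl}_{x^-}^*D_i - E$, I would use the isomorphism $H^0({\rm Bl}_{x^-}X, \sum_i {\rm Bl}_{x^-}^*D_i - E) \cong H^0(X, \mathcal{O}(\sum_i D_i) \otimes \mathcal{I}_{x^-})$ and produce enough sections of $\mathcal{O}(\sum_i D_i)$ vanishing at $x^-$ to (a) separate $x^-$ from the other points of $X^-$, (b) span $\mathfrak{m}_{x^-}/\mathfrak{m}_{x^-}^2$ so as to cover every tangent direction in $E \cong \PP(T_{x^-}X)$, and (c) remain nonvanishing at every point of $\bigcup_i D_i$. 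For (a) and (b), I would trivialize $\mathcal{O}(\sum_i D_i)$ on $X^-$ by its canonical nonvanishing section and invoke regular functions on the affine cell $X^-$ vanishing at $x^-$ that extend to $X$ with at most simple poles along each $D_j$; for (c) I would multiply a section of the globally generated line bundle $\mathcal{O}(D_j)$ that is nonzero at the prescribed point with a section of $\mathcal{O}(\sum_{k \neq j}D_k)$ vanishing at $x^-$ obtained from (a)--(b).

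Finally, I would identify the Mori cone with $\langle \tilde{C}_1, \ldots, \tilde{C}_r, e\rangle$, after which the nef-cone description is immediate by duality together with the dual-basis computation of the first step. For any irreducible curve $C' \subset {\rm Bl}_{x^-}X$, either $C' \subset E \cong \PP^{n-1}$ and $[C'] = m[e]$ with $m \geq 1$, or $\pi(C') = C$ is an irreducible curve in $X$ with $[C] = \sum a_i[C_i]$, $a_i \geq 0$, by Theorem \ref{curves}(1). Writing $[C'] = \sum b_i[\tilde{C}_i] + c[e]$ in the basis from the first step, pushing forward by $\pi$ yields $b_i = d a_i \geq 0$ with $d = \deg(C' \to C) \geq 1$, and pairing with the nef class $\sum_i {\rm Bl}_{x^-}^*D_i - E$ yields $c \geq 0$; the reverse inclusion is clear since each $\tilde{C}_i$ and $e$ is effective. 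The main obstacle I anticipate is item (b) in the global-generation step: producing coordinates at $x^-$ that extend across the boundary with only simple poles is a property characteristic of the simple $G$-projective setting and requires genuine structural input beyond Brion's cone descriptions.
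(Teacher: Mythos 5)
Your skeleton is sound and runs parallel to the paper's: the dual-basis computation in your first step is correct and is exactly what makes the paper's pairing argument work; global generation of the pullbacks ${\rm Bl}_{x^-}^*D_i$ is handled identically; and your route to the cones (classify irreducible curves to get ${\rm NE}$, then dualize to get ${\rm Nef}$) is a legitimate reordering of the paper's argument, which instead writes an arbitrary nef class as $L'=\sum_i b_i{\rm Bl}_{x^-}^*D_i-cE$, deduces $c=L'\cdot e\ge 0$ and $b_j-c=L'\cdot\tilde{C}_j\ge 0$, and expresses $L'$ as a nonnegative combination of the asserted generators. Note, however, that your ${\rm NE}$ step already invokes the nefness of $\sum_i{\rm Bl}_{x^-}^*D_i-E$ to conclude $c\ge 0$, so the entire proposal reduces to the one step you flag as problematic.

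That step---global generation (or even just nefness) of $\overline{D}:=\sum_i{\rm Bl}_{x^-}^*D_i-E$---is a genuine gap, and your own diagnosis is accurate. Items (a)--(c) amount to asserting that $H^0(X,\mathcal{O}(\sum_i D_i)\otimes\mathcal{I}_{x^-})$ contains sections whose linear parts span $\mathfrak{m}_{x^-}/\mathfrak{m}_{x^-}^2$ and which avoid any prescribed boundary point; the claim that coordinate functions on the affine cell $X^-$ extend to $X$ with only simple poles along the $D_j$ is precisely the statement to be proved, and nothing of this sort holds for a general smooth variety with a globally generated basis of ${\rm Pic}$ (this is exactly where Mumford-type counterexamples live). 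The missing input is the group action. The paper's argument is: $D=\sum_i D_i$ is linearly equivalent to its translate $n_{w_0}D$ by a representative of the longest Weyl group element; each $n_{w_0}D_i$ is $B$-stable, closed and nonempty, hence contains a $B$-fixed point, which must be the sink $x^-$; therefore $\overline{D}$ is linearly equivalent to an effective divisor on ${\rm Bl}_{x^-}X$, which is $B$-stable for the lifted $B$-action of Proposition \ref{Blow up: Proposition 1.2} and whose support contains no $B$-orbit, so \cite[Lemma 4]{brion} (move the divisor by $B$-translates; the base locus is closed and $B$-stable, hence would contain a $B$-fixed point in the support) yields base-point-freeness. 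Some such equivariant argument is unavoidable here; without it your step (b) cannot be completed, and with it the rest of your proposal goes through.
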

\begin{proof}
Since $D_i$ is a globally generated line bundle on $X$ (see Theorem \ref{divisors}), ${\rm Bl}_{x^-}^*D_i$ is a globally generated line bundle on ${\rm Bl}_{x^-} X$; in particular, it is nef. Now we prove that $H - E$ is a nef line bundle. Then we show that all the nef line bundles on ${\rm Bl}_{x^-} X$ are non-negative linear combination of ${\rm Bl}_{x^-}^*D_i$ and $H-E$. 

 Note that $D_i$ is linearly equivalent to $n_{w_0}D_i$, where $n_{w_0}$ is a representative of the longest element $w_0$ of the Weyl group $W$ in the normalizer of $T$ in $G$.
 
 Note that $D_i$ are $B^-$-stable, so $D$ is $B^-$-stable. Therefore, the divisor $D$ is linearly equivalent to the $B$-stable divisor $n_{w_0}D$. Let $D' := n_{w_0}D$ and $H' =  {\rm Bl}_{x^-}^*D'$. Then $H$ is linearly equivalent to the $B$-stable divisor $H'$. 
 
 Since $n_{w_0}D_i$ are $B$-stable, they contain a $B$-fixed point. Further, since $x^-$ is the unique $B$-fixed point of $X$, $n_{w_0}D_i$ passes through the point $x^-$ for each $i$. Therefore, 
 \[
 {\rm Bl}_{x^-}^*n_{w_0}D_i - E
 \]
 is effective; in particular $H' -E$ is effective, as 
 \[
 H' -E = (H' - rE) + (r-1)E.
 \] 
 Recall that the blow up map ${\rm Bl}_{x^-}$ is $B$-equivariant. Since $D'$ is $B$-equivariant, $H'$ is $B$-equivariant. By Proposition \ref{Blow up: Proposition 1.2}, note that, $E$ is $B$-equivariant. Therefore, by Theorem \ref{Blow up: Proposition 1.2} (3), $H' - E$ is $B$-equivariant. By Theorem \ref{theorem: nef}, it suffices to show that $H' -E$ intersects non-negatively with any $B$-stable curve on ${\rm Bl}_{x^-}X$. 
 
 Let $C \subset {\rm Bl}_{x^-}X$ be an irreducible $B$-stable curve. If $C \subset E$, then 
 \[
 (H' -E) \cdot C = -E\cdot C > 0.
 \]
 Otherwise, since the blow up map ${\rm Bl}_{x^-}$ is $B$-equivariant, and ${\rm Bl}_{x^-}(C)$ is an irreducible $B$-stable curve in $X$, it follows from Theorem \ref{curves} that
 \[
 {\rm Bl}_{x^-}(C) = \sum_{i=1}^s a_iC_i
 \]
 for some $1\leq s \leq r$ and $a_i \geq 1$ for all $1\leq i \leq s$. 
Note that $C_i$ is isomorphic to $\mathbb{P}^1$ by Theorem \ref{curves} (3). Note that the class of $C$ is numerically equivalent to the class of $\sum_{i=1}^s a_i\tilde{C_i}$. Therefore, 
 \begin{eqnarray*}
 (H - E)\cdot C &=& (H' -E)\cdot \sum_{i=1}^s a_i ({\rm Bl}_{x^-}^{*} C_i - l) \\
 &=& \sum_{i=1}^s a_i (H' -E)\cdot ({\rm Bl}_{x^-}^{*} C_i - l) \\
 &=& \sum_{i=1}^s a_i(D'\cdot C_i + E\cdot l) \\
 &=&  \sum_{i=1}^s a_i (D'\cdot C_i - 1) \\
 &=& \sum_{i=1}^s a_i(D\cdot C_i -1) = 0, \quad \text{(by Theorem \ref{curves} (2))}
 \end{eqnarray*}
hence $H - E$ is nef.
	
Now, let $L'$ be a nef line bundle on ${\rm Bl}_{x^-} X$. Since the Picard group of ${\rm Bl}_{x^-} X$ is generated by ${\rm Bl}_{x^-}^*D_i$ and $E$, we can write 
	\[
	L' = \sum\limits_{i=1}^n b_i{\rm Bl}_{x^-}^*D_i - cE.
	\] 
	Since $L'$ is nef, we have $c = L' \cdot l \geq 0$ and $b_j - c = L' \cdot  \tilde{C_j} \geq 0$. 
	Further, since $H- E$ is nef, we can write $L'$ as
	\[
	 L' = c(H - E) + \sum\limits_{i=1}^n (b_j - c){\rm Bl}_{x^-}^*D_i.
	\]
	This completes the proof of the first part of the lemma. 
	
	For the second part, note that $\tilde{C_j}$ and $l$ form dual classes for the generating nef divisors in Nef(${\rm Bl}_{x^-} X$), and hence generate the Mori cone of curves NE(${\rm Bl}_{x^-} X$).
\end{proof}

\begin{remark}
If $C_1, . . . , C_r$ are the only $B$-stable irreducible curves, then the above lemma follows easily from Theorems \ref{fmss}, \ref{divisors}, and \ref{curves} as follows. Any $B$-stable curve on $Bl_{x^-} X$ is the strict transform of some $\tilde{C_i}$ or a curve in the exceptional divisor $E$. Hence ${\rm NE}({\rm Bl}_{x^-} X) = \langle\tilde{C_1} , \tilde{C_2} , \ldots, \tilde{C_r} , l \rangle$,
by Theorem \ref{fmss}. Then the dual cone ${\rm Nef} ({\rm Bl}_{x^-} X)$ can be computed directly using Theorem \ref{curves}.
\end{remark}

\begin{remark}\label{Rem:nefcone}
Following the notation of above lemma, we observe that for each $j$, the line bundle ${\rm Bl}_{x^-}^*D_j - E$ is not nef on ${\rm Bl}_{x^-} X$. Indeed, its intersection with $\tilde{C_i}$ is $-1$ for $i \neq j$.
\end{remark}

We now present the proof of our main theorem on the Seshadri constants of an ample line bundle on a nonsingular simple $G$-projective variety $X$. 

\begin{proof}[Proof of Theorem \ref{SC:line-bundle}]
Consider the blow-up map ${\rm Bl}_{x^-}:{\rm Bl}_{x^-} X \to X$ of $X$ at the sink $x^-\in X$ with exceptional divisor $E$. Then, the Seshadri constant of $L$ at $x^-$ is given by 
\[
\varepsilon(L;x^-)=\sup\{\lambda: {\rm Bl}_{x^-}^*L-\lambda E \text{ is nef}\}.
\]
By Lemma \ref{lemma:blow-up}, the line bundle ${\rm Bl}_{x^-}^*L- \min_i\{a_i\} E$ is nef. Therefore, we have
\[
\varepsilon(L;x^-) \geq \min_i\{a_i\}.
\]
On the other hand, for each $i$, the Seshadri ratio of $L$ corresponding to the curve $C_i$ passing through $x^-$ is $\frac{L\cdot C_i}{1} = a_i$. 
Hence, we obtain 
\[
\varepsilon(L;x^-) = \min_i\{a_i\}.
\]
This completes the proof of the theorem. 
\end{proof}

\begin{remark}
For any line bundle $L$ on a nonsingular simple $G$-projective variety $X$, 
\[
\varepsilon(L;x^{-}) = \varepsilon(L; g\cdot x^{-})
\]
for all $g \in G.$
\end{remark}

\begin{exmp}
Let \( X = \mathbb{P}^n \) and \( G = \mathrm{PGL}(n+1) \). Let \( x_0, \ldots, x_n \) be the homogeneous coordinates on \( \mathbb{P}^n \). Under the natural action of \( G \) on \( X \), the variety \( X \) is a nonsingular simple \( G \)-projective variety.

In this setting, the sink of \( X \) is the point \( x^{-} = [1:0:\cdots:0] \), and the hyperplane divisor is given by \( H = \{ x_0 = 0 \} \). Consider the curve \( C = \{ x_2 = \cdots = x_n = 0 \} \), which is isomorphic to \( \mathbb{P}^1 \).

Let \( D = mH \) for some \( m > 0 \). Then, by Theorem~\ref{SC:line-bundle}, we have:
\[
\varepsilon(D; x^{-}) = m.
\]

Note that for any point \( x \in \mathbb{P}^n \), there exists \( g \in G \) such that \( x = g \cdot x^{-} \). Therefore, it follows that
\[
\varepsilon(D; x^{-}) = \varepsilon(D; g \cdot x^{-}) = \varepsilon(D; x).
\]
\end{exmp}

\begin{exmp}
Let \( G \) be a semisimple algebraic group of rank \( n \), and let \( B \) be a Borel subgroup of \( G \). Let \( X = G/B \) be the flag variety associated with \( G \). Then \( X \) is a nonsingular simple \( G \)-projective variety with unique sink \( x^{-} = [B] \).

For each \( i = 1, 2, \ldots, n \), let \( s_i \) be the simple reflection corresponding to the simple root \( \alpha_i \), and define:
\[
D_i = \overline{B^{-}s_i B}/B, \quad C_i = \overline{Bs_iB}/B.
\]
Note that \( D_i \cdot C_j = \delta_{ij} \).

Let \( L = \sum\limits_{i=1}^n a_i D_i \) be an ample line bundle. Then, by Theorem~\ref{SC:line-bundle}, we have:
\[
\varepsilon(L; x^{-}) = \min\limits_{i} \{a_i\}.
\]

Moreover, for any point \( x \in G/B \), there exists \( g \in G \) such that \( x = g \cdot x^{-} \). Since the Seshadri constant is invariant under the action of \( G \), it follows that
\[
\varepsilon(L; x) = \varepsilon(L; x^{-}).
\]
\end{exmp}

\begin{exmp}
Let \( G = \mathrm{PGL}(2) \), and let \( B \) be the image of the upper triangular invertible matrices in \( \mathrm{SL}(2) \) under the quotient map \( \mathrm{SL}(2) \to \mathrm{PGL}(2) \). Consider the natural action of \( G \) on \( \mathbb{P}^1 \). 

Now consider the diagonal action of \( G \) on \( (\mathbb{P}^1)^3 := \mathbb{P}^1 \times \mathbb{P}^1 \times \mathbb{P}^1 \). Let \( p_1, p_2, p_3 \in (\mathbb{P}^1)^3 \) be pairwise distinct points, and let 
\[
X = \overline{G \cdot (p_1, p_2, p_3)} \subset (\mathbb{P}^1)^3
\]
be the closure of the \( G \)-orbit. Then \( X = (\mathbb{P}^1)^3 \) is a simple \( G \)-projective variety with sink \( \infty^3 := \infty \times \infty \times \infty \).

The irreducible divisors \( D_i \) and the curves \( C_i \) (as in Theorems~\ref{divisors} and \ref{curves}) are given by
\[
D_i = \left( (\mathbb{P}^1)^{i-1} \times \infty \times (\mathbb{P}^1)^{3-i} \right) \cap X, \quad 
C_i = \left( \infty^{i-1} \times \mathbb{P}^1 \times \infty^{3-i} \right) \cap X.
\]

Let \( L = \sum\limits_{i=1}^3 a_i D_i \) be any ample line bundle on \( X \). Then, it follows from Theorem~\ref{SC:line-bundle} that 
\[
\varepsilon(L; \infty^3) = \min\{a_i\}.
\]
In particular, the Seshadri constant of the anti-canonical line bundle $-K_X = 2(D_1 + \cdots + D_r)$ is
\[
\varepsilon(-K_X; \infty^3) = 2.
\]
\end{exmp}

\subsection{Seshadri constants of vector bundles}

Seshadri constants for nef vector bundles were defined in \cite{Hac}. We recall the definition below:

Let \( E \) be a nef vector bundle on a projective variety \( X \), and let \( x \in X \). Let \( \mathrm{Bl}_x: \mathrm{Bl}_x X \to X \) denote the blow-up of \( X \) at \( x \). We have the following commutative diagram:
\[
\xymatrix{
	\mathbb{P}(\mathrm{Bl}_x^*E)\ar[r]^{\widetilde{\mathrm{Bl}_x}} \ar[d]_{q} &
	\mathbb{P}(E) \ar[d]_{p} \\
	\mathrm{Bl}_x X \ar[r]^{\mathrm{Bl}_x} &
	X
}
\]
Let \( \xi = \mathcal{O}_{\mathbb{P}(\mathrm{Bl}_x^*E)}(1) \) denote the tautological line bundle on \( \mathbb{P}(\mathrm{Bl}_x^*E) \), and let \( Y_x = p^{-1}(x) \), \( Z_x = \widetilde{\mathrm{Bl}_x}^{-1}(Y_x) \). 

The Seshadri constant of \( E \) at \( x \) is defined as:
\[
\varepsilon(E; x) := \sup\left\{\lambda \in \mathbb{R} ~\middle|~ \xi - \lambda \cdot \mathcal{O}_{\mathbb{P}(\mathrm{Bl}_x^*E)}(Z_x) \text{ is nef} \right\}.
\]

Using the notation above, we now present the proof of our main theorem on the Seshadri constants of nef vector bundles on a nonsingular simple \( G \)-projective variety \( X \):

\begin{proof}[Proof of Theorem \ref{Seshadri-constant:vect-bundle}]
Let \( W_{x^{-}} \) denote the exceptional divisor of the blow-up
\[
\mathrm{Bl}_{x^{-}}: \mathrm{Bl}_{x^-} X \to X
\]
at the point \( x^{-} \in X \). By Proposition \ref{Blow up: Proposition 1.2}, the blow-up \( \mathrm{Bl}_{x^-} X \) inherits a \( B \)-action, and the map is \( B \)-equivariant.

By definition, the Seshadri constant of \( E \) at \( x^{-} \) is:
\[
\varepsilon(E; x^{-}) = \sup\left\{ \lambda \in \mathbb{R} ~\middle|~ \xi - \lambda \cdot q^*\left(\mathcal{O}_{\mathrm{Bl}_{x^-} X}(W_{x^{-}})\right) \text{ is nef} \right\}.
\]
By Theorem \ref{theorem: nef}, to prove that \( \xi - \lambda \cdot q^*(\mathcal{O}_{\mathrm{Bl}_{x^-} X}(W_{x^{-}})) \) is nef, it suffices to verify:
\[
\left( \xi - \lambda \cdot q^*(\mathcal{O}_{\mathrm{Bl}_{x^-} X}(W_{x^{-}})) \right) \cdot D \geq 0
\]
for every \( B \)-stable curve \( D \subset \mathbb{P}(\mathrm{Bl}_{x^{-}}^* E) \). 

For such a curve \( D \), set \( \widetilde{C} := q(D) \). Then \( D \subset \mathbb{P}(\mathrm{Bl}_{x^{-}}^* E|_{\widetilde{C}}) \). Consider the diagram:
\[
\xymatrix{
\mathbb{P}(\mathrm{Bl}_{x^-}^*E|_{\widetilde{C}})\ar@{^{(}->}[r] \ar[d]_{q_1} & \mathbb{P}(\mathrm{Bl}_{x^-}^*E)\ar[r]^{\widetilde{\mathrm{Bl}_{x^-}}} \ar[d]_{q} &
\mathbb{P}(E) \ar[d]_{p} \\
\widetilde{C} \ar@{^{(}->}[r] & \mathrm{Bl}_{x^-} X \ar[r]^{\mathrm{Bl}_{x^-}} & X
}
\]

Then:
\begin{align*}
(\xi - \lambda \cdot q^*(\mathcal{O}_{\mathrm{Bl}_{x^-} X}(W_{x^{-}}))) \cdot D 
&= \left( \mathcal{O}_{\mathbb{P}(\mathrm{Bl}_{x^{-}}^*E|_{\widetilde{C}})}(1) - \lambda \cdot q_1^*(\mathcal{O}_{\mathrm{Bl}_{x^-} X}(W_{x^{-}})|_{\widetilde{C}}) \right) \cdot D \\
&= \deg(\mathrm{Bl}_{x^-}^* E|_{\widetilde{C}}) - \lambda \cdot (\mathcal{O}_{\mathrm{Bl}_{x^-} X}(W_{x^{-}}) \cdot \widetilde{C}).
\end{align*}

So \( \xi - \lambda \cdot q^*(\mathcal{O}_{\mathrm{Bl}_{x^-} X}(W_{x^{-}})) \) is nef if and only if
\[
M_{\lambda, \widetilde{C}} := \mathcal{O}_{\mathbb{P}(\mathrm{Bl}_{x^{-}}^* E|_{\widetilde{C}})}(1) - \lambda \cdot q_1^*(\mathcal{O}_{\mathrm{Bl}_{x^-} X}(W_{x^{-}})|_{\widetilde{C}})
\]
is nef over $\mathbb{P}(\mathrm{Bl}_{x^-}^*E|_{\widetilde{C}})$ for every \( B \)-stable curve \( \widetilde{C} \subset \mathrm{Bl}_{x^-} X \). 

Assume $\widetilde{C} \subset W_{x^{-}}$. Since $W_{x^{-}}$ is the exceptional divisor of ${\rm Bl}_{x^-} X$, we have
\[
\mathcal{O}_{{\rm Bl}_{x^-} X}(W_{x^{-}})|_{W_{x^{-}}} = \mathcal{O}_{W_{x^{-}}}(-1).
\]
Therefore, $M_{\lambda, \widetilde{C}}$ is nef for every $\lambda\ge 0.$

Now assume \( \widetilde{C} \not\subset W_{x^{-}} \), and let \( C := \mathrm{Bl}_{x^{-}}(\widetilde{C}) \), a \( B \)-stable curve in \( X \). Since \( x^{-} \in C \) and \( C \simeq \mathbb{P}^1 \), we have:
\[
\mathcal{O}_{\mathrm{Bl}_{x^-} X}(W_{x^{-}}) \cdot \widetilde{C} = 1.
\]
The bundle \( E|_C \) splits as:
\[
E|_C = \mathcal{O}_C(a_1(C)) \oplus \cdots \oplus \mathcal{O}_C(a_n(C)),
\]
for some integers $a_1(C),\ldots ,a_n(C)$ (see \cite[Th\'eor\`eme 1.1]{Grothendieck}). Then  $M_{\lambda, \widetilde{C}}$ is nef if and only if $\mathcal{O}_{\widetilde{C}}(a_1(C)-\lambda)\oplus \cdots \oplus \mathcal{O}_{\widetilde{C}}(a_n(C)-\lambda)$ is nef. 
This direct sum is nef if and only if $\lambda \le {\rm min}\{a_1(C),\ldots, a_{n}(C)\}.$ Since there are only finitely many $B$-stables curves on $X$ (see \cite[Lemma 6]{brion}), it follows that:
\[
\varepsilon(E; x^{-}) = \min_{i, C} \{ a_i(C) \}.
\]
\end{proof}

\begin{remark}
Let \( X \) be a nonsingular simple \( G \)-projective variety. Let \( x \) be any \( T \)-fixed point such that the number of \( T \)-stable curves through \( x \) is finite. Let \( E \) be a \( T \)-equivariant nef vector bundle on \( X \) of rank \( n \). Then, imitating the proof of Theorem~\ref{Seshadri-constant:vect-bundle}, we conclude:
\[
\varepsilon(E; x) = \min_{i, C} \{ a_i(C) \},
\]
where the minimum is over all \( T \)-stable curves \( C \) through \( x \), and integers \( a_1(C), \ldots, a_n(C) \) such that
\[
E|_C \simeq \mathcal{O}_C(a_1(C)) \oplus \cdots \oplus \mathcal{O}_C(a_n(C)).
\]
\end{remark}

\begin{remark}
Similar computations of Seshadri constants have been carried out for equivariant vector bundles on toric varieties in \cite{bhks, bhn, BKN, HMP}.
\end{remark}

\section*{Acknowledgements}
The authors would like to thank their respective institutions for providing a supportive and productive research environment. The second named author acknowledges the National Board for Higher Mathematics (NBHM) Post Doctoral Fellowship with Ref. Number 0203/21(5)/2022-R\&D-II/10342.

\section*{data availability statement}
This manuscript has no associated data.

\section*{Conflict of interest}
On behalf of all authors, the corresponding author states that there is no conflict of interest.

\bibliographystyle{plain}

\end{document}